\title{A note on thick subcategories of stable derived categories}
\thanks{Version from November 9, 2011.}
\author{Henning Krause}
\address{Henning Krause\\ Fakult\"at f\"ur Mathematik\\
Universit\"at Bielefeld\\ D-33501 Bielefeld\\ Germany.}
\email{hkrause@math.uni-bielefeld.de}
\author{Greg Stevenson}
\address{Greg Stevenson\\ Fakult\"at f\"ur Mathematik\\
Universit\"at Bielefeld\\ D-33501 Bielefeld\\ Germany.}
\email{gstevens@math.uni-bielefeld.de}
\newtheorem{lem}{Lemma}
\newtheorem{cor}[lem]{Corollary}
\newtheorem{thm}[lem]{Theorem}
\theoremstyle{remark}
\newtheorem{rem}[lem]{Remark}
\theoremstyle{definition}
\numberwithin{equation}{section}
\renewcommand{\mod}{\operatorname{\mathsf{mod}}\nolimits}
\newcommand{\Proj}{\operatorname{\mathsf{Proj}}\nolimits}
\newcommand{\agProj}{\operatorname{{Proj}}\nolimits}
\newcommand{\proj}{\operatorname{\mathsf{proj}}\nolimits}
\newcommand{\Inj}{\operatorname{\mathsf{Inj}}\nolimits}
\newcommand{\Hom}{\operatorname{Hom}\nolimits}
\newcommand{\Ker}{\operatorname{Ker}\nolimits}
\newcommand{\Coker}{\operatorname{Coker}\nolimits}
\newcommand{\coh}{\operatorname{coh}\nolimits}
\newcommand{\Ext}{\operatorname{Ext}\nolimits}
\newcommand{\Spec}{\operatorname{Spec}\nolimits}
\newcommand{\stmod}{\operatorname{\mathsf{stmod}}\nolimits}
\newcommand{\Sing}{\operatorname{Sing}\nolimits}
\newcommand{\xto}{\xrightarrow}
\def\a{\alpha}
\def\d{\delta}
\def\A{{\mathsf A}}
\def\C{{\mathsf C}}
\def\D{{\mathsf D}}
\def\K{{\mathsf K}}
\def\P{{\mathsf P}}
\def\str{{\mathcal O}}
\def\bfi{{\mathbf i}}
\def\bbZ{{\mathbb Z}}
\def\bbX{{\mathbb X}}
\def\frm{{\mathfrak m}}
\def\frp{{\mathfrak p}}
\begin{document}

\begin{abstract}
  For an exact category having enough projective objects, we establish
  a bijection between thick subcategories containing the projective
  objects and thick subcategories of the stable derived category. Using 
  this bijection we classify thick subcategories of finitely generated 
  modules over local complete intersections and produce generators 
  for the category of coherent sheaves on a separated noetherian 
   scheme with an ample family.
\end{abstract}

\maketitle
\tableofcontents

\section{Introduction}

Let $A$ be a not necessarily commutative ring and $\mod A$ be the
category of finitely presented $A$-modules.  Consider the \emph{stable
  derived category} of $A$ in the sense of Buchweitz \cite{Buc}, which
is also called the \emph{triangulated category of singularities} or just the \emph{singularity category},
following work of Orlov \cite{Orl}. This category is by definition the
Verdier quotient of the bounded derived category of $\mod A$ with
respect to the triangulated subcategory consisting of all perfect
complexes: \[\D^b(\mod A)/\D^b(\proj A).\]

In a number of recent papers, thick subcategories of this triangulated
category have been studied and even classified, typically in terms of
primes ideals of some appropriate cohomology ring \cite{BCR,
  BIK,OpSt,Ste,Tak}. In this note we point out a bijection between
thick subcategories of the stable derived category and thick
subcategories of the module category containing all projective
modules. In a special case this bijection has been observed by
Takahashi \cite{Tak:2010}. Using our more general form of this 
bijection we are able to extend Takahashi's Theorem 4.6 (1) to 
complete intersections. 

Further applications of this bijection arise from the study of generators of
exact and triangulated categories. We illustrate this  by
results of Oppermann--Stovicek \cite{OpSt} and Schoutens
\cite{Sch}. In the latter case, we include a generalisation 
as well as a new proof.

\section{Thick subcategories versus thick subcategories}

Let $\A$ be an exact category in the sense of Quillen \cite{Qui} and
denote by $\D^b(\A)$ its bounded derived category \cite{Nee,Ver}.
Suppose that $\A$ has enough projective objects. Let $\Proj \A$ be the
full subcategory consisting of the projective objects and view
$\D^b(\Proj\A)$ as a thick subcategory of $\D^b(\A)$. The Verdier
quotient
\[\D^b(\A)/\D^b(\Proj\A)\]
is by definition the \emph{stable derived category} of $\A$.

We are interested in thick subcategories of the stable derived
category and observe that they correspond bijectively to thick
subcategories of $\D^b(\A)$ containing all projective objects. Recall
that a full additive subcategory of a triangulated category is
\emph{thick} if it is closed under shifts, mapping cones, and direct
summands.

A full additive subcategory $\C$ of $\A$ is called \emph{thick} if it
is closed under direct summands and has the following two out of three
property: for every exact sequence $0\to X\to Y\to Z\to 0$ in $\A$
with two terms in $\C$, the third term belongs to $\C$ as well.

In the following we identify $\A$ with the full subcategory of $\D^b(\A)$ consisting of
all complexes concentrated in degree zero.

\begin{thm}\label{th:thick}
  Let $\A$ be an exact category having enough projective objects.  The
  map sending a subcategory $\D$ of $\D^b(\A)$ to $\D\cap \A$ induces
  a bijection between
\begin{enumerate} 
\item[--] the thick subcategories of $\D^b(\A)$ containing all
  projective objects, and 
\item[--] the thick subcategories of $\A$ containing all projective
  objects.
\end{enumerate} 
The inverse map sends a thick subcategory $\C$ of $\A$ to $\D^b(\C)$.
\end{thm}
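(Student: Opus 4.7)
The plan is to verify that the assignments $\Phi\colon \D \mapsto \D\cap\A$ and $\Psi\colon \C \mapsto \D^b(\C)$ are well-defined and mutually inverse. For $\Phi$, closure of $\D\cap\A$ under direct summands is inherited from $\D$, and the two-out-of-three property follows because every short exact sequence in $\A$ induces a triangle in $\D^b(\A)$, whose third term is determined up to isomorphism. For $\Psi$, I would first note that a thick subcategory $\C$ of $\A$ containing $\Proj\A$ inherits an exact structure for which it has enough projectives: any $X \in \C$ admits an epimorphism from some $P \in \Proj\A \subseteq \C$, and the kernel lies in $\C$ by two-out-of-three. Then the natural functor $\D^b(\C)\to \D^b(\A)$ should be fully faithful with essential image the full subcategory of complexes whose cohomology objects all lie in $\C$; this image is closed under shifts, cones (via the long exact cohomology sequence combined with thickness of $\C$ in $\A$), and summands, hence is a thick subcategory of $\D^b(\A)$ containing $\Proj\A$.

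The composite $\Phi\circ\Psi$ is the identity by inspection: the objects of $\A$ lying in the essential image of $\D^b(\C)$ are precisely the objects of $\C$. The heart of the proof is $\Psi\circ\Phi=\id$, i.e., $\D^b(\D\cap\A)=\D$ for any thick $\D\supseteq\Proj\A$. The containment $\D^b(\D\cap\A)\subseteq\D$ is immediate, since $\D$ is thick and contains $\D\cap\A$. For the reverse, given $X\in\D$, I would fix a bounded-above projective resolution $P^\bullet \to X$. For $n$ sufficiently large (below the lowest cohomology degree of $X$), the stupid truncation $Q^\bullet$ of $P^\bullet$ keeping only degrees $\ge -n$ is a bounded complex of projectives, and the complementary stupid truncation has cohomology concentrated in its top degree, namely the syzygy $K=\Ker(d^{-n})\in\A$. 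The resulting degree-wise split short exact sequence of complexes yields a triangle
\[
K[n+1] \lto X \lto Q^\bullet
\]
in $\D^b(\A)$. Since $\Proj\A\subseteq\D$, the bounded complex $Q^\bullet$ lies in $\D$, and then $X\in\D$ forces $K[n+1]\in\D$, hence $K\in\D\cap\A$. Both outer terms of the triangle therefore lie in $\D^b(\D\cap\A)$ (the syzygy as a shift of an object of $\D\cap\A$, and $Q^\bullet$ as a bounded complex of projectives from $\Proj\A\subseteq\D\cap\A$), and so does $X$.

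The main obstacle I expect is the well-definedness of $\Psi$: one must justify carefully that $\D^b(\C)\to\D^b(\A)$ is fully faithful and identify its essential image as the complexes with cohomology in $\C$. This rests on the existence of bounded-above projective resolutions in both categories and the fact that $\Proj\A\subseteq\C$ allows them to be chosen compatibly, so that Ext groups computed in $\C$ and in $\A$ agree; the details are standard once the inherited exact structure on $\C$ is set up. With the embedding in hand, the syzygy computation is a routine application of stupid truncation, and both composites of $\Phi$ and $\Psi$ are then immediate.
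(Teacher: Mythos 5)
Your overall strategy coincides with the paper's, and in particular your truncation argument for $\Psi\circ\Phi=\id$ (stupidly truncating a bounded-above projective resolution to obtain a triangle whose outer terms are a bounded complex of projectives and a shifted syzygy lying in $\D\cap\A$) is exactly the paper's proof of that half. But there is a genuine error in your treatment of $\Psi$. The essential image of $\D^b(\C)\to\D^b(\A)$ is \emph{not} the subcategory of complexes whose cohomology lies in $\C$ (and ``cohomology of a complex'' does not even make sense in a general exact category). For instance, over $A=k[x]/(x^2)$ the subcategory $\C=\proj A$ is thick and contains the projectives, $\D^b(\C)$ is the category of perfect complexes, and the perfect complex $A\xrightarrow{\;x\;}A$ has cohomology $k\notin\C$ in two degrees. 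Relatedly, a thick subcategory in the two-out-of-three sense is not a Serre subcategory, so your long-exact-cohomology-sequence argument for closure of the image under cones does not apply; closure under cones is instead automatic for the essential image of a fully faithful exact functor, while closure under direct summands requires knowing that $\D^b(\C)$ is idempotent complete --- which is why the paper first reduces to the case that $\A$ is idempotent complete via Balmer--Schlichting, a point you omit entirely.

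The false description of the essential image then infects your claim that $\Phi\circ\Psi=\id$ ``by inspection''. What actually has to be shown is that an object $X\in\A$ which is isomorphic in $\D^b(\A)$ to a bounded complex $C$ with entries in $\C$, acyclic outside degree zero, already lies in $\C$; this is not automatic and is precisely where the paper uses thickness of $\C$: one checks that $\Coker\d^{-2}$ and $\Ker\d^{0}$ lie in $\C$ and that $X$ is the cokernel of the induced admissible monomorphism between them. With that argument and the idempotent-completeness reduction supplied, the remainder of your proof goes through and agrees with the paper's.
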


\begin{proof}
We may assume that $\A$ is idempotent complete, keeping in mind that
the idempotent completion of $\D^b(\A)$ equals the bounded derived
category of the idempotent completion of $\A$; see \cite{BaSc}.

The first part of this proof is taken from the appendix of
\cite{BeKr}.  Suppose that $\C$ is a thick subcategory of $\A$
containing all projectives.  The category $\C$ inherits an exact
structure from $\A$ and the inclusion $\C\to\A$ induces therefore an
exact functor $\D^b(\C)\to\D^b(\A)$.  The fact that $\C$ contains the
projective objects implies that this functor is fully faithful; see
for instance \cite[Proposition~III.2.4.1]{Ver}. Thus the full
subcategory $\D$ of $\D^b(\A)$ consisting of objects isomorphic to a
complex of objects in $\C$ is a thick subcategory.  We claim that
$\C=\D\cap\A$. Clearly, $\C\subseteq\D\cap\A$. Thus we fix
$X\in\D\cap\A$. Then $X$ is in $\D^b(\A)$ isomorphic to a bounded
complex $C$ with differential $\d$ such that $C^n\in\C$ for all $n$
and $C$ is acyclic in all degrees $n\neq 0$. Now we use that $\C$ is
thick. Thus $\Coker\d^{-2}$ and $\Ker\d^0$ belong to $\C$, and we have
an admissible monomorphism $\Coker\d^{-2}\to\Ker\d^0$ such that the
cokernel is isomorphic to $X$. We conclude that $X$ belongs to $\C$,
and therefore $\C=\D\cap\A$.

  Now fix a thick subcategory $\D$ of $\D^b(\A)$ containing all
  projectives and set $\C=\D\cap\A$.  Each exact sequence $0\to X\to
  Y\to Z\to 0$ in $\A$ gives rise to an exact triangle $X\to Y\to Z\to
  X[1]$ in $\D^b(\A)$. Thus $\C$ is a thick subcategory of $\A$. We
  claim that the functor $\D^b(\C)\to\D$ is an equivalence. In view of
  the first part of the proof, it suffices to show that each object
  $X$ in $\D$ is isomorphic to a complex of objects in $\C$. We may
  assume that $X$ is a complex of projective objects with $X^n=0$ for
  $n\gg 0$ and $X$ acyclic in degrees $n\le p$ for some integer
  $p$. Truncating $X$ in degree $p$ gives an exact triangle $X'\to
  X\to X''\to X'[1]$ such that $X'$ is a bounded complex of projective
  objects and $X''$ is acyclic in all degrees different from
  $p$. Clearly, $X'$ belongs to $\D^b(\C)$, and $X''$ is isomorphic to a shift of 
  an object from $\C$. It follows that $X$ is isomorphic to an object
  in $\D^b(\C)$.
\end{proof}

\begin{rem}\label{rem:stable}
As noted earlier thick subcategories of the stable derived category correspond to thick subcategories of $\D^b(\A)$ containing the projective objects. It follows that the theorem gives a bijection between thick subcategories of the stable derived category and thick subcategories of $\A$ containing all projective objects.
\end{rem}

\begin{rem}\label{rem:thick}
  In the theorem and its proof, one can replace the subcategory
  $\Proj\A$ by any thick subcategory $\P$ of $\A$ having the property
  that each object $X\in\A$ admits an admissible epimorphism $P\to X$
  with $P\in\P$.
\end{rem}

\begin{rem}
A full additive subcategory $\C$ of $\A$ is thick and contains all
projective objects iff it is closed under taking extensions, direct
summands, cosyzygies and syzygies.
\end{rem}

\section{Applications and comments}
In this section we make some brief remarks concerning applications of the bijection of the last section. In particular, we extend the results of Schoutens and Takahashi as promised in the introduction.

\subsection{Classification theorems}
We will explain how to use the main theorem to obtain two classification results, one new and one known, for thick subcategories of abelian categories.

We first use Theorem~\ref{th:thick} to give a classification of the thick subcategories of $\mod A$, which contain $A$, when $A$ is a local complete intersection ring; this extends \cite[Theorem~4.6~(1)]{Tak:2010}. In order to state the classification we need to introduce a hypersurface associated to $A$.

Let $A$ be a local \emph{complete intersection} i.e., there is a regular local ring $B$ and a surjection $B\to A$ with kernel generated by a regular sequence. Set $\bbX = \Spec A$, $\mathbb{T} = \Spec B$, $\mathcal{E} = \str_{\mathbb{T}}^{\oplus c}$, and $t = (b_1,\ldots,b_c)$ where the $b_i$ form a regular sequence generating the kernel of $B\to A$. Let $\mathbb{Y}$ be the hypersurface defined by the section $\Sigma_{i=1}^c b_i x_i$ of $\str_{\mathbb{P}^{c-1}_B}(1)$ where the $x_i$ form a basis for the free $B$-module $H^0(\mathbb{P}^{c-1}_B, \str_{\mathbb{P}^{c-1}_B}(1))$. In summary we are concerned with the following commutative diagram
\begin{displaymath}
\xymatrix{
\mathbb{P}^{c-1}_A \ar[r]^-i \ar[d]_p & \mathbb{Y} \ar[r]^-u & \mathbb{P}^{c-1}_B \ar[d]^{q} \\
\bbX \ar[rr]^-j && \mathbb{T}.
}
\end{displaymath}

The following corollary of Theorem~\ref{th:thick}, based upon \cite[Corollary~10.5]{Ste}, shows that the hypersurface $\mathbb{Y}$ controls the thick subcategories of $\mod A$ which contain the projectives.

\begin{cor}
Let $A$ be a local complete intersection and let $\mathbb{Y}$ be the hypersurface as defined above. Then there is an order preserving bijection
\begin{displaymath}
\left\{ \begin{array}{c}
\text{specialisation closed} \\ \text{subsets of}\; \Sing \mathbb{Y}
\end{array} \right\}
\xymatrix{ \ar[r]<1ex> \ar@{<-}[r]<-1ex> &} \left\{
\begin{array}{c}
\text{thick subcategories of} \\ \mod A \; \text{containing}\; A
\end{array} \right\},
\end{displaymath}
where $\Sing \mathbb{Y}$ denotes the set of $y\in \mathbb{Y}$ such that the local ring $\str_{\mathbb{Y},y}$ is not regular.
\end{cor}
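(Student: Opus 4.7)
The plan is to obtain the claimed bijection as the composition of two known bijections. First, by Theorem~\ref{th:thick} together with Remark~\ref{rem:stable}, thick subcategories of $\mod A$ containing all projective objects correspond bijectively to thick subcategories of the stable derived category $\D^b(\mod A)/\D^b(\proj A)$. Since $A$ is local, every finitely generated projective $A$-module is free, and hence a thick subcategory of $\mod A$ contains $A$ if and only if it contains every projective object. Thus the hypothesis "contains $A$" matches precisely the form of Theorem~\ref{th:thick}.

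Second, I would invoke \cite[Corollary~10.5]{Ste}, which classifies the thick subcategories of the singularity category of a local complete intersection in terms of specialisation closed subsets of $\Sing \mathbb{Y}$. The hypersurface $\mathbb{Y}$ together with the commutative diagram relating $\bbX$, $\mathbb{T}$ and $\mathbb{Y}$ is exactly the data that feeds into that classification: modules over $A$ are converted (through the diagram) into objects of a triangulated category of sheaves on $\mathbb{Y}$ whose support theory is controlled by $\Sing \mathbb{Y}$, with singular support providing the assignment from objects of the singularity category to subsets of $\Sing \mathbb{Y}$.

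Composing these two bijections yields the desired correspondence, and order-preservation is automatic. The first bijection is given in one direction by intersection with $\mod A$ and in the other by $\C\mapsto \D^b(\C)$, both manifestly inclusion-preserving; the second is the standard support/cosupport dictionary, sending a specialisation closed subset to the thick subcategory of objects supported on it, which is also inclusion-preserving. There is nothing further to check beyond these two inputs.

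The only substantive obstacle is verifying that the classification in \cite[Corollary~10.5]{Ste} is set up over the same singularity category and with the same hypersurface $\mathbb{Y}$ as used here; once this is confirmed, the corollary follows by pure formal composition, without any additional calculation. This is the reason the paper remarks that the corollary is "based upon" that reference, rather than requiring a new argument.
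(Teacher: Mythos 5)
Your proposal is correct and follows exactly the paper's own argument: compose the bijection from Theorem~\ref{th:thick} (via Remark~\ref{rem:stable}) with the classification in \cite[Corollary~10.5]{Ste}. The observation that over a local ring ``contains $A$'' is equivalent to ``contains all projectives'' is a welcome explicit justification of a step the paper leaves implicit.
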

\begin{proof}
By Theorem \ref{th:thick} there is a bijection between thick subcategories of $\mod A$ containing $A$ and thick subcategories of $\D^b(\mod A)$ containing $A$. As stated in Remark \ref{rem:stable} the thick subcategories of $\D^b(\mod A)$ containing $A$ are in bijection with the thick subcategories of the stable derived category $\D^b(\mod A)/\D^b(\proj A)$. The result now follows from the classification of thick subcategories of the stable derived category given in \cite[Corollary~10.5]{Ste}.
\end{proof}

In a similar vein we can apply the theorem to the work of Benson, Carlson, and Rickard \cite{BCR} on stable categories in modular representation theory. Let $G$ be a finite group and let $k$ be a field whose characteristic divides the order of $G$. We denote the category of finite dimensional representations of $kG$ by $\mod kG$ and by $\stmod kG$ its stable category which is obtained by annihilating all projective modules. We say a subcategory $\C$ of $\mod kG$ is a \emph{thick tensor ideal} if it is a thick subcategory which is closed under tensoring with arbitrary objects of $\mod kG$. Applying Theorem~\ref{th:thick} to \cite[Theorem~3.4]{BCR} immediately recovers the following known result.

\begin{thm}
Let $k$ and $G$ be as above. Then there is an order preserving bijection
\begin{displaymath}
\left\{ \begin{array}{c}
\text{specialisation closed} \\ \text{subsets of}\; \agProj H^*(G,k)
\end{array} \right\}
\xymatrix{ \ar[r]<1ex> \ar@{<-}[r]<-1ex> &} \left\{
\begin{array}{c}
\text{thick tensor ideals of} \\ \mod kG \; \text{containing}\; kG
\end{array} \right\}
\end{displaymath}
where $H^*(G,k)$ denotes the cohomology of $G$ with coefficients in $k$.
\end{thm}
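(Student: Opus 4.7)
The plan is to derive this by combining Theorem~\ref{th:thick} with the Benson--Carlson--Rickard classification~\cite[Theorem~3.4]{BCR} of thick tensor ideals of $\stmod kG$. First, I would note that since $kG$ is self-injective, a theorem of Rickard yields a triangle equivalence
\[
\stmod kG \;\simeq\; \D^b(\mod kG)/\D^b(\proj kG),
\]
so that Theorem~\ref{th:thick}, together with Remark~\ref{rem:stable} applied to $\A=\mod kG$, supplies a bijection between thick subcategories of $\stmod kG$ and thick subcategories of $\mod kG$ containing $kG$ (note that $\proj kG = \add kG$, so the condition ``containing all projectives'' reduces to ``containing $kG$'').

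The only thing beyond Theorem~\ref{th:thick} that needs to be checked is that this bijection restricts to one between thick \emph{tensor} ideals on each side. Given a thick subcategory $\C$ of $\mod kG$ containing $kG$ and the corresponding thick $\D\subseteq\D^b(\mod kG)$, the theorem yields $\C=\D\cap\mod kG$ and $\D=\D^b(\C)$. If $\D$ is a tensor ideal, then so is $\C=\D\cap\mod kG$, since the tensor product of two modules over a field is again a module. Conversely, if $\C$ is closed under tensoring with arbitrary $M\in\mod kG$, then $-\otimes_k M$ is exact (as $k$ is a field) and carries any bounded complex with terms in $\C$ to such a complex; hence it preserves $\D^b(\C)=\D$. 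Thus the bijection restricts as desired, and the corresponding thick tensor ideals of $\stmod kG$ are obtained by passing to the Verdier quotient.

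Composing with \cite[Theorem~3.4]{BCR} will then yield the asserted bijection with specialisation closed subsets of $\Proj H^*(G,k)$, and all three bijections in sight are manifestly inclusion-preserving, hence so is the composite. I do not expect a real obstacle here: aside from invoking Rickard's equivalence and the substantive content of~\cite{BCR}, the argument is purely formal, with the tensor-compatibility verification above being the sole genuine step.
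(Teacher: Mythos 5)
Your proposal is correct and follows essentially the same route as the paper, which simply states that applying Theorem~\ref{th:thick} to \cite[Theorem~3.4]{BCR} ``immediately recovers'' the result; you have in fact supplied more detail than the paper does, in particular the verification that the bijection of Theorem~\ref{th:thick} restricts to tensor ideals on both sides. That check (together with the routine compatibility of Rickard's equivalence and the Verdier quotient with the tensor structure) is exactly the implicit content the paper leaves to the reader.
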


Although this is implicit in the work of Benson, Carlson, and Rickard it does not seem to be explicitly stated in the literature. One should also compare this with the analogue for the category of all $kG$-modules which appears as Theorem 10.4 in \cite{BIK}.

\subsection{Generating categories of coherent sheaves on schemes}

The \emph{singular locus} $\Sing A$ of a commutative noetherian ring
$A$ consists of all prime ideals $\frp$ such that the localisation
$A_\frp$ is not regular. More generally the \emph{singular locus} $\Sing \bbX$ of a scheme $\bbX$ consists of those points $x\in \bbX$ such that the local ring $\str_{\bbX,x}$ is not regular. We give a new proof of the following result.

\begin{thm}[Schoutens \cite{Sch}]\label{th:Sch}
  Let $A$ be a commutative noetherian ring. Then the smallest thick
  subcategory of the category of $A$-modules containing $A$ and
  $A/\frp$ for all $\frp$ in the singular locus of $A$ equals the
  category of noetherian $A$-modules.
\end{thm}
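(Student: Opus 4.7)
The plan is to deduce Theorem~\ref{th:Sch} from Theorem~\ref{th:thick}. Write $\C$ for the smallest thick subcategory of $\Mod A$ containing $A$ and the quotients $A/\frp$ with $\frp \in \Sing A$. The inclusion $\C \subseteq \mod A$ is immediate: $\mod A$ is itself a thick subcategory of $\Mod A$ containing all the prescribed generators. For the reverse inclusion, note that $\C$ is closed under direct summands and contains $A$, hence contains $\proj A = \add(A)$. Thus $\C$ is a thick subcategory of $\mod A$ containing the projectives, and Theorem~\ref{th:thick} together with Remark~\ref{rem:stable} identifies it with a thick subcategory of the stable derived category $\D^b(\mod A)/\D^b(\proj A)$, generated by the images of the $A/\frp$ for $\frp \in \Sing A$. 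Since every object of the stable derived category is represented by a module in $\mod A$, and every finitely generated module over a noetherian ring admits a prime filtration, proving $\C = \mod A$ reduces to showing that $A/\frq \in \C$ for every $\frq \in \Spec A$.

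For $\frq \in \Sing A$ this is by hypothesis, so the remaining task is the case $A_\frq$ regular. I would proceed by noetherian induction on the closed subset $V(\frq) \subseteq \Spec A$. In the base case, $\frq$ is maximal and $\Supp(A/\frq) = \{\frq\}$, whence the global projective dimension of $A/\frq$ equals the projective dimension of $k(\frq)$ over the regular local ring $A_\frq$, which is $\dim A_\frq < \infty$; thus $A/\frq$ lies in the thick subcategory generated by $A$, and in particular in $\C$. For the inductive step, fix a non-maximal $\frq \notin \Sing A$ and assume $A/\mathfrak{r} \in \C$ for every $\mathfrak{r} \supsetneq \frq$; then by prime filtration every finitely generated $A$-module with support contained in $V(\frq) \setminus \{\frq\}$ lies in $\C$. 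Using regularity of $A_\frq$ of dimension $d$, pick $a_1,\dots,a_d \in \frq$ with $(a_1,\dots,a_d) A_\frq = \frq A_\frq$, and form the Koszul complex $K = K(a_1,\dots,a_d;A) \in \D^b(\proj A)$. The canonical map $K \to A/(a_1,\dots,a_d) \to A/\frq$ becomes a quasi-isomorphism after localization at $\frq$, so the cohomology modules of its cone are finitely generated $A$-modules, annihilated by $(a_1,\dots,a_d)$, with $\frq$ outside their supports.

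The main obstacle is that these cohomology modules are a priori only supported in $V(a_1,\dots,a_d)$, and this set may contain irreducible components outside $V(\frq)$ arising from primes minimal over $(a_1,\dots,a_d)$ that do not contain $\frq$; the inductive hypothesis does not accommodate such primes. The strategy to overcome this is, first, to refine the $a_i$ by a prime avoidance argument---successively picking $a_i \in \frq$ that avoid the finitely many unwanted minimal primes of $(a_1,\dots,a_{i-1})$ while preserving the local condition $(a_1,\dots,a_d) A_\frq = \frq A_\frq$---and, where such a refinement is not enough, to peel off the remaining components via Chinese Remainder short exact sequences relating the primary components of $J = (a_1,\dots,a_d)$, reducing each offending piece either to a module of finite projective dimension, to a module supported in $V(\frq) \setminus \{\frq\}$, or to a lower-dimensional instance of the theorem accessible by iterating the whole argument. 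Once every cohomology module of the cone has been placed in $\C$, the exact triangle $K \to A/\frq \to \mathrm{cone} \to K[1]$, combined with $K \in \D^b(\proj A)$ and Theorem~\ref{th:thick}, forces $A/\frq \in \C$ and completes the induction.
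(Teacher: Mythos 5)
Your reduction to showing $A/\frq\in\C$ for every prime $\frq$ (via prime filtrations and closure of thick subcategories under extensions) is fine, as is the base case, but the inductive step has a genuine gap which you yourself flag and then do not close. The obstacle is not a technicality that prime avoidance or primary decomposition can be expected to remove. Take $A=k[x,y]/(xy)$ and $\frq=(x)$, so $A_\frq$ is a field, $d=0$, the Koszul complex is just $A$, and there are no elements $a_i$ to refine. The cone of $A\to A/\frq$ is the kernel $(x)\cong A/(y)$ shifted, whose support $V(y)$ is not contained in $V(\frq)$, and $A/(y)$ is not of finite projective dimension; your fallback of ``a lower-dimensional instance of the theorem accessible by iterating the whole argument'' is here exactly the original problem for the prime $(y)$, which is symmetric to $(x)$ --- so the argument is circular. (The reason the theorem nevertheless holds in this example is that $\Omega(A/(x,y))\cong A/(x)\oplus A/(y)$, i.e.\ a syzygy of a generator over the singular locus \emph{decomposes}; no argument that works one prime of $V(J)$ at a time will see this.) A smaller point: your assertion that every object of the stable derived category is represented by a module is Buchweitz's theorem for Gorenstein rings and is false for general noetherian $A$, though you do not actually use it.

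The paper's proof goes a completely different way. Via Lemma~\ref{le:KInj} (compact generation of $\K(\Inj\bbX)$ and a Bousfield localisation argument, fed back through Theorem~\ref{th:thick}), generation is reduced to a \emph{detection} statement: any acyclic complex $X$ of injective $A$-modules with $\Hom_{\K(A)}(A/\frp,X[n])=0$ for all $\frp\in\Sing A$ and all $n$ must be nullhomotopic. This is then checked by taking $X$ homotopically minimal, observing that $X_\frp=0$ for $\frp\notin\Sing A$ so that the cycles $Z^n(X)$ are supported on the singular locus, and applying Baer's criterion together with $\Hom_{\K(A)}(-,X[n+1])\cong\Ext^1_A(-,Z^n(X))$. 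If you want to salvage a direct module-theoretic induction you would essentially be reconstructing Schoutens' original argument, which requires more than the Koszul-cone step you propose.
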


As a corollary we will prove a generalisation of this result for schemes having an ample family of line bundles. Keeping this generalisation in mind, let us fix a separated noetherian scheme $\bbX$ and let $\{\mathcal{L}_i \; \vert \; 1\leq i \leq n\}$ be an ample family of line bundles on $\bbX$. Recall that a family of line bundles $\{\mathcal{L}_i \; \vert \; 1\leq i \leq n\}$ on $\bbX$ is \emph{ample} if there is a family of sections $f\in H^0(\bbX, \mathcal{L}_i^{\otimes m})$ with $1\leq i \leq n$ and $m>0$ such that the $\bbX_f = \{x\in \bbX\; \vert\; f_x \notin \frm_x\mathcal{L}^{\otimes m}_x\}$, where $\frm_x$ is the maximal ideal of $\str_{\bbX,x}$, form an open affine cover of $\bbX$. Of course one can just keep in mind the example of an affine scheme $\bbX = \Spec A$ and take $\{\str_\bbX\}$ for the ample family. We denote by $\coh \bbX$ the abelian category of coherent sheaves of $\str_\bbX$-modules.

The proof passes through the homotopy category of injective sheaves of $\str_\bbX$-modules. Let
$\K(\bbX)$ denote the homotopy category of complexes of quasi-coherent $\str_\bbX$-modules and
$\K(\Inj \bbX)$ the full subcategory consisting of complexes of injective quasi-coherent
$\str_\bbX$-modules. We identify each quasi-coherent sheaf with the corresponding
complex in $\K(\bbX)$ concentrated in degree zero.

\begin{lem}\label{le:KInj}
  Let $\bbX$ be a noetherian scheme as above and $\C$ a subcategory
  of $\coh \bbX$ containing the sheaves $\{\mathcal{L}_i^{\otimes m} \; \vert \; 1\leq i \leq n, m\in \bbZ\}$. Suppose that any complex $Y$ of injective quasi-coherent
  $\str_\bbX$-modules is nullhomotopic provided that
\[\Hom_{\K(\bbX)}(X,Y[n])=0 \qquad\text{for all}\quad X\in\C,\,n\in\bbZ.\]
Then the smallest thick subcategory of coherent sheaves containing $\C$
is $\coh \bbX$.
\end{lem}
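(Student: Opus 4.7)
The plan is to combine Theorem~\ref{th:thick} with Krause's theorem identifying $\D^b(\coh \bbX)$ with the compact objects of $\K(\Inj \bbX)$, together with Neeman's criterion for a set of compacts to generate a compactly generated triangulated category.

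I begin by setting $\T = \Thick_{\coh \bbX}(\C)$. Since $\C$ contains all $\mathcal{L}_i^{\otimes m}$ and, by ampleness, every coherent sheaf admits an admissible epimorphism from a finite direct sum of such line bundles, the variant of Theorem~\ref{th:thick} described in Remark~\ref{rem:thick} applies with $\P$ taken to be the thick subcategory of $\coh \bbX$ generated by the $\mathcal{L}_i^{\otimes m}$. A routine check using the resulting bijection shows that $\D^b(\T)$ coincides with the thick subcategory of $\D^b(\coh \bbX)$ generated by $\C$, and in particular $\T = \D^b(\T) \cap \coh \bbX$. Hence it suffices to prove that the thick subcategory of $\D^b(\coh \bbX)$ generated by $\C$ is all of $\D^b(\coh \bbX)$.

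For this I would appeal to Krause's theorem that for a noetherian scheme $\bbX$ the homotopy category $\K(\Inj \bbX)$ is compactly generated, with subcategory of compact objects canonically equivalent, via injective resolutions, to $\D^b(\coh \bbX)$. Under this identification the hypothesis of the lemma states precisely that the right-orthogonal $\C^\perp$ in $\K(\Inj \bbX)$ vanishes. By Neeman's criterion, a set of compact objects whose right-orthogonal is zero generates the ambient compactly generated category as a localising subcategory, and consequently the thick subcategory of compact objects it generates coincides with the whole subcategory of compact objects. Translating back through Krause's equivalence, the thick subcategory of $\D^b(\coh \bbX)$ generated by $\C$ is all of $\D^b(\coh \bbX)$, which completes the argument.

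The main delicate point is checking that $\Hom_{\K(\bbX)}(X, Y[n])$ for $X \in \C$ and $Y \in \K(\Inj \bbX)$ agrees with the Hom-group used in Krause's framework, so that the hypothesis of the lemma really translates to $\C^\perp = 0$ in $\K(\Inj \bbX)$; this is straightforward since $\K(\Inj \bbX)$ is a full subcategory of $\K(\bbX)$ and each $X \in \C$ may be replaced, for the purpose of computing these Homs, by its injective resolution. Once this and the applicability of Krause's theorem to the given separated noetherian $\bbX$ are in hand, the rest of the argument is formal.
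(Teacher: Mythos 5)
Your proposal is correct and follows essentially the same route as the paper's proof: translate the hypothesis through Krause's equivalence $\D^b(\coh \bbX)\simeq \K(\Inj\bbX)^c$, invoke Neeman's Bousfield-localisation lemma to see that $\C$ generates the compacts as a thick subcategory, and descend to $\coh\bbX$ via Theorem~\ref{th:thick} and Remark~\ref{rem:thick}. The only difference is cosmetic (you apply the bijection first and the generation argument second), so no further comment is needed.
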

\begin{proof}
  The functor $\bfi\colon \coh \bbX\to \K(\Inj \bbX)$ taking a sheaf to its
  injective resolution extends to an equivalence $\D^b(\coh
  \str_\bbX)\xto{\sim}\K(\Inj \bbX)^c$ onto the full subcategory of compact
  objects of $\K(\Inj \bbX)$, by \cite[Proposition~2.3]{Kra}.  Note
  that \[\Hom_{\K(\bbX)}(\bfi X,Y)\xto{\sim} \Hom_{\K(\bbX)}(X,Y) \qquad\text{for all}\quad
  Y\in\K(\Inj \bbX),\] by \cite[Lemma~2.1]{Kra}.  The assumption on $\C$
  implies that the thick subcategory of $\K(\Inj \bbX)$ generated by
  $\bfi(\C)$ equals $\K(\Inj \bbX)^c$. This follows from a standard
  argument involving Bousfield localisation; see
  \cite[Lemma~2.2]{Nee1992}. Thus the correspondence in
  Theorem~\ref{th:thick}, together with Remark~\ref{rem:thick}, implies that $\C$ generates $\coh \bbX$.
\end{proof}

\begin{proof}[Proof of Theorem~\ref{th:Sch}]
  In view of Lemma~\ref{le:KInj}, it suffices to show that each
  complex $X$ of injective $A$-modules is nullhomotopic provided that
\[\Hom_{\K(A)}(A,X[n])=0 \quad\text{and}\quad \Hom_{\K(A)}(A/\frp,X[n])=0\]
for all $n\in\bbZ$ and $\frp\in\Sing A$.  

Thus we fix a complex $X$ satisfying these conditions. The first one
implies that $X$ is acyclic. We may assume that $X$ is
homotopically minimal, that is, there is no non-zero direct summand of
$X$ which is nullhomotopic; see \cite[Appendix~B]{Kra}. 
This means that for each $n\in\bbZ$ the truncated complex
\[X^n\to X^{n+1}\to X^{n+2}\to\cdots\] yields a minimal injective
resolution of $Z^n(X)$. Localising at a prime ideal $\frp$ preserves
this property.  If $\frp\not\in\Sing A$, then $Z^n(X)_\frp$ is
injective, and therefore $Z^{n+1}(X)_\frp=0$.  Thus $X_\frp=0$. Now
consider for each $n\in\bbZ$ the exact sequence
\[0\to Z^{n}(X)\to X^{n}\to Z^{n+1}(X)\to 0\] of modules supported on
$\Sing A$.  If this sequence does not split, one finds a finitely
generated submodule $U$ of $Z^{n+1}(X) $ isomorphic to $A/\frp$ for
some $\frp\in\Sing A$ such that $\Ext^1_A(U,Z^{n}(X))\neq 0$.  This
follows from Baer's criterion, and one uses that the prime ideals in
$\Sing A$ form a specialisation closed subset of $\Spec A$.  Now
observe that
\[\Hom_{\K(A)}(-,X[n+1])\cong\Ext^1_\A(-,Z^{n}(X)).\]
Thus the assumption on $X$ implies that it is nullhomotopic.
\end{proof}

\begin{cor}\label{th:Sch2}
Let $\bbX$ be a separated noetherian scheme and let $\{\mathcal{L}_i \; \vert \; 1\leq i \leq n\}$ be an ample family of line bundles on $\bbX$. Then the smallest thick subcategory of $\coh \bbX$ containing the set of coherent $\str_\bbX$-modules
\begin{displaymath}
\mathcal{S} = \{\mathcal{L}_i^{\otimes m} \otimes_{\str_\bbX} \str_{\mathcal{V}(x)}^\a \; \vert \; 1\leq i \leq n, m \in \bbZ, x\in \Sing \bbX, \a\in \{0,1\}\}
\end{displaymath}
where $\str_{\mathcal{V}(x)}$ denotes the structure sheaf of the closed subset $\mathcal{V}(x)$ endowed with the reduced induced scheme structure and $\str_{\mathcal{V}(x)}^0 = \str_\bbX$, is the whole category of coherent $\str_\bbX$-modules.
\end{cor}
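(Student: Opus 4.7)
The plan is to apply Lemma~\ref{le:KInj} with $\C=\mathcal{S}$, reducing the corollary to showing that any $Y\in\K(\Inj\bbX)$ satisfying $\Hom_{\K(\bbX)}(X,Y[n])=0$ for all $X\in\mathcal{S}$ and $n\in\bbZ$ is nullhomotopic. Using \cite[Appendix~B]{Kra}, I may assume that $Y$ is homotopically minimal.

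The main strategy is then to reduce locally to (the proof of) Theorem~\ref{th:Sch}. Fix any affine open $U=\bbX_f\cong\Spec A$ from the ample cover, where $f\in H^0(\bbX,\mathcal{L}_i^{\otimes m_0})$. I intend to verify the two vanishing hypotheses needed by Schoutens' argument for the complex $Y|_U$ of injective $A$-modules: namely $\Hom_{\K(A)}(A,Y|_U[n])=0$ for all $n$, and $\Hom_{\K(A)}(A/\frp,Y|_U[n])=0$ for all $\frp\in\Sing A$ and $n\in\bbZ$. Both will follow from the global vanishing hypothesis via the colimit identity
\[\Hom_{\K(U)}(F|_U,Y|_U[n])=\colim_j\Hom_{\K(\bbX)}(\mathcal{L}_i^{\otimes -jm_0}\otimes F,Y[n])\]
valid for any coherent sheaf $F$ on $\bbX$, where the colimit is taken along multiplication by $f$. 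Taking $F=\str_\bbX$ yields the first vanishing, since the twists $\mathcal{L}_i^{\otimes -jm_0}$ lie in $\mathcal{S}$ (with $\alpha=0$); taking $F=\str_{\mathcal{V}(x)}$, where $x\in U$ corresponds to $\frp$, yields the second, since $\mathcal{L}_i^{\otimes -jm_0}\otimes\str_{\mathcal{V}(x)}\in\mathcal{S}$ (with $\alpha=1$) and every singular prime of $A$ arises from a singular point of $\bbX$ lying in $U$.

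Once these hypotheses are in place, the proof of Theorem~\ref{th:Sch} applied on $U$ shows that $Y|_U$ is nullhomotopic. Since $Y$ is globally homotopically minimal, each stalk $Y_x$ is a minimal complex of injective $\str_{\bbX,x}$-modules, and a nullhomotopic minimal complex is zero. Hence $Y_x=0$ for all $x\in U$; covering $\bbX$ by the opens $\bbX_f$ forces $Y=0$ as a complex of sheaves, and therefore $Y$ is nullhomotopic.

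The main obstacle will be rigorously justifying the colimit identity for $\Hom$, which amounts to the ampleness-flavoured computation that sections over $\bbX_f$ of a quasi-coherent sheaf are obtained from its global twists by clearing denominators in $f$, together with the adjunction $\Hom_\bbX(\mathcal{L}_i^{\otimes -jm_0}\otimes F,Y)=\Hom_\bbX(F,\mathcal{L}_i^{\otimes jm_0}\otimes Y)$ and the exactness of filtered colimits which lets one pass to $H^n$. One also needs the standard observation that homotopical minimality is preserved under taking stalks, so that minimal plus nullhomotopic really does mean zero locally; both ingredients are routine once set up carefully.
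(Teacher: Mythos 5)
Your proposal is correct, and it rests on the same two pillars as the paper's proof: localisation to the affine opens $\bbX_f$ of the ample cover, and the ``clear denominators in $f$'' mechanism relating $\Hom$'s over $\bbX_f$ to $\Hom$'s over $\bbX$ out of the twists $\mathcal{L}_i^{\otimes -jm_0}\otimes F$ (your colimit identity is exactly the content of the lemma of Hartshorne that the paper invokes, packaged as $\Gamma(\bbX_f,\mathcal{G})=\colim_j\Gamma(\bbX,\mathcal{G}\otimes\mathcal{L}_i^{\otimes jm_0})$ applied to $\mathcal{G}=\HOM^\bullet(F,Y)$). The logical organisation differs, though. The paper argues contrapositively: given $Y$ not nullhomotopic, it first reduces to $Y$ acyclic (using that the twists of the ample family generate the derived category), picks a \emph{single} well-chosen open $\bbX_f$ on which some $Z^j(Y)$ fails to be injective, produces one non-zero map from some $\str_{\mathcal{V}(x)}|_{\bbX_f}$ via the affine theorem, and lifts that one map using a power of $f$. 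You instead verify the affine hypotheses on \emph{every} open of the cover and then must glue: this forces you to add the step ``homotopically minimal and locally contractible implies zero,'' which the paper never needs. That step is fine but does require the standard facts that restriction to an open of a noetherian scheme preserves injectivity of quasi-coherent sheaves (so $Y|_U$ lives in $\K(\Inj U)$ and the affine argument applies verbatim) and preserves homotopical minimality (equivalently, that $Z^nY\to Y^n$ being an injective envelope localises); alternatively you could bypass minimality by noting that local contractibility gives acyclicity of $Y$ and local, hence global, injectivity of each $Z^nY$. Your route buys a slightly cleaner deduction of acyclicity (it falls out of the $\alpha=0$ hypothesis on each affine rather than from generation of the derived category), at the cost of the extra local-to-global step; the paper's route is shorter because one affine open and one lifted morphism suffice.
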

\begin{proof}
We proceed essentially as in the theorem i.e., we show that if $X$ is a complex in $\K(\Inj \bbX)$ which is not nullhomotopic then some object of $\mathcal{S}$ maps to a suspension of $X$. So let us fix such a complex $X$. We may assume that $X$ is acyclic as the tensor powers of the sheaves in the ample family generate the derived category (see \cite[Example~1.11]{NeeGD}) and thus detect any complex having non-zero cohomology.

Since $X$ is not nullhomotopic there exists a $j\in \bbZ$ such that $Z^j(X)$ is not injective. Let $f\in H^0(\bbX, \mathcal{L}_i^{\otimes m})$ be a section such that $\bbX_f = \{x\in \bbX\; \vert\; f_x \notin \frm_x\mathcal{L}^{\otimes m}_x\}$ is an open affine on which $Z^j(X)\vert_{\bbX_f}$ is not injective; we can find such an $f$ by ampleness of the family of line bundles. In particular, $X\vert_{\bbX_f}$ is non-zero in $\K(\Inj \bbX_f)$. Hence, by the proof of the theorem, there is an $x\in \Sing \bbX$ and a non-zero map $g~\in~\Hom_{\K(\bbX_f)}(\str_{\mathcal{V}(x)}\vert_{\bbX_f}, X\vert_{\bbX_f}[j+1])$ for some integer $j$. By \cite[Lemma~III.5.14]{Hartshorne} there exists an $m'\geq 0$ so that $f^{m'}g$ lifts to a morphism
\begin{displaymath}
\tilde{g}\colon \str_{\mathcal{V}(x)} \to X \otimes_{\str_\bbX} \mathcal{L}_i^{\otimes m'}[j+1].
\end{displaymath}
The map $\tilde{g}$ is not nullhomotopic as if it were this would yield a nullhomotopy for $f^{m'}g$ and thus $g$. It just remains to note that, by adjunction, the map $\tilde{g}$ is equivalent to a non-zero morphism $\str_{\mathcal{V}(x)} \otimes_{\str_\bbX} \mathcal{L}_i^{\otimes -m'} \to X[j+1]$ witnessing the fact that $X$ is not nullhomotopic.
\end{proof}

\subsection{Strong generators}

Strong generators of triangulated categories were introduced by Bondal
and Van den Bergh \cite[\S2.2]{BovB}.  There seems to be no obvious
analogue for exact categories. So it would be interesting to translate
the following result into a statement about abelian categories, using the
bijection from Theorem~\ref{th:thick}.

\begin{thm}[Oppermann--Stovicek \cite{OpSt}]
  Let $k$ be a field and $\A$ a $k$-linear abelian category which is
  Hom-finite and has enough projective objects. If $\D$ is a thick
  subcategory of $\D^b(\A)$ which contains all projective objects and
  admits a strong generator, then $\D=\D^b(\A)$.\qed
\end{thm}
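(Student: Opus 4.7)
My plan is to use Theorem~\ref{th:thick} to translate the statement into one about the exact category $\A$. Under that bijection, $\D$ corresponds to $\C = \D \cap \A$, a thick subcategory of $\A$ containing all projectives, with the inclusion $\C\hookrightarrow\A$ inducing an equivalence $\D^b(\C)\xrightarrow{\sim}\D$. The conclusion $\D=\D^b(\A)$ is thus equivalent to $\C=\A$, i.e.\ to showing that every $A\in\A$ lies in $\C$.

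For any $A\in\A$, a projective resolution $P_\bullet\to A$ and brutal truncation yield, for each $n\geq 1$, a distinguished triangle in $\D^b(\A)$ with a bounded complex of projectives as one term and a shift of the $n$-th syzygy $\Omega^n A$ as another. Since every bounded complex of projectives lies in $\D$, this gives $A\in\C$ if and only if $\Omega^n A\in\C$ for some sufficiently large $n$. The problem therefore reduces to showing that the syzygies of every $A\in\A$ eventually enter $\C$.

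To bring in the strong generator $G$ of $\D$ with generation time $d$, I would use the Hom-finite $k$-linear structure to derive a Rouquier-style cohomological bound. Every object of $\C$ is a direct summand of one built from $G$ in at most $d$ cones; combining this presentation with Hom-finiteness via the long exact sequences coming from these cones should yield a uniform upper bound, depending only on $G$ and $d$, on the degrees $i$ where $\Ext^i_\A(X,M)$ can be nonzero, for $X\in\C$ and $M\in\A$ arbitrary. Such a uniform Ext-vanishing, applied via a Baer-type criterion to the short exact sequence $0\to\Omega^{n+1}A\to P_n\to\Omega^n A\to 0$ of syzygies, should force $\Omega^n A$ to be projective once $n$ exceeds the bound, and hence to lie in $\Proj\A\subseteq\C$.

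The main obstacle is making this Rouquier-style bound genuinely effective: strong generation is an intrinsic statement about $\D\simeq\D^b(\C)$, whereas the syzygies we must control live in the ambient category $\A$. Bridging this gap likely requires replacing $G$ by a bounded complex of projectives of $\A$ and carefully tracking how the generation time $d$ propagates through such resolutions, using Hom-finiteness at each stage to ensure that the relevant Ext-groups are finite-dimensional and can thus be forced to vanish once they enter the forbidden degree range.
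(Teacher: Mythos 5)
Note first that the paper does not prove this theorem; it is quoted from Oppermann--Stovicek with a \qed, so your proposal has to be judged on its own merits. Your first two reductions are fine and are exactly the intended use of Theorem~\ref{th:thick}: it suffices to show $\C=\D\cap\A$ equals $\A$, and for each $A\in\A$ membership of $A$ in $\C$ is equivalent to membership of any syzygy $\Omega^nA$. The fatal problem is your third step. The claimed ``Rouquier-style bound'' --- that $X\in\C=\langle G\rangle_d$ forces $\Ext^i_\A(X,M)=0$ for all $i$ beyond a bound depending only on $G$ and $d$ --- is false. It would say that every object of a strongly generated thick subcategory has finite (indeed uniformly bounded) projective dimension. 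Take $\A=\mod k[x]/(x^2)$ and $\D=\D^b(\A)$, which is strongly finitely generated by Rouquier's theorem for finite-dimensional algebras; here $\C=\A$, yet $\Ext^i(k,k)\neq 0$ for all $i\geq 0$. Building $X$ from $G$ in $d$ cones controls only maps that are invisible to $G$ (the ghost lemma); it controls $\Ext^i_\A(X,-)$, i.e.\ maps out of projective resolutions, only when $G$ itself is projective. That special case is precisely the genuinely true statement in this vicinity (an object of $\A$ lying in $\langle\Proj\A\rangle_{n+1}$ has projective dimension at most $n$, proved via compositions of \emph{perfect} ghosts, namely the connecting maps $\Omega^iA[i]\to\Omega^{i+1}A[i+1]$ of the syzygy sequences), but an arbitrary strong generator $G$ carries no projective-dimension information whatsoever, and your sketch offers no mechanism to bridge that gap --- which is exactly where the Hom-finiteness hypothesis has to do real, non-obvious work in the Oppermann--Stovicek argument.

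There is a secondary circularity even granting the false bound: to split $0\to\Omega^{n+1}A\to P_n\to\Omega^nA\to 0$ you need vanishing of $\Ext^1_\A(\Omega^nA,\Omega^{n+1}A)$, whose first argument is $\Omega^nA$ (equivalently $\Ext^{n+1}_\A(A,\Omega^{n+1}A)$ with first argument $A$); but neither $A$ nor $\Omega^nA$ is yet known to lie in $\C$, so a bound quantified over $X\in\C$ cannot be applied to it. A correct proof must instead argue contrapositively: if $\C\neq\A$, pick $A\in\A\setminus\C$; then no syzygy $\Omega^nA$ is projective, so every Yoneda composite $A\to\Omega^nA[n]$ of connecting maps is a non-zero composition of $n$ perfect ghosts, and one must show --- and this is the heart of the matter, using Hom-finiteness --- that this is incompatible with $\D$ being generated in $d$ cones by a single object while containing all projectives.
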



\begin{thebibliography}{99}
%
\bibitem{BaSc} P. Balmer\ and\ M. Schlichting, Idempotent completion
  of triangulated categories, J. Algebra {\bf 236} (2001), no.~2,
  819--834.
%
\bibitem{BeKr} A. Beligiannis\ and\ H. Krause, Thick subcategories and
  virtually Gorenstein algebras, Illinois J. Math. {\bf 52} (2008),
  no.~2, 551--562.
%
\bibitem{BCR} D. J. Benson, J. F. Carlson\ and\ J. Rickard, Thick
  subcategories of the stable module category, Fund. Math. {\bf 153}
  (1997), no.~1, 59--80.
%
\bibitem{BIK} D.~J. Benson, S.~B. Iyengar, and H.~Krause, Stratifying
  modular representations of finite groups, Ann. of Math. (1)
  \textbf{174} (2011); to appear, arxiv:0810.1339.
%
\bibitem{BovB} A. Bondal\ and\ M. Van den Bergh, Generators and
  representability of functors in commutative and noncommutative
  geometry, Mosc. Math. J. {\bf 3} (2003), no.~1, 1--36, 258.
%
\bibitem{Buc} R.-O. Buchweitz, Maximal Cohen-Macaulay modules and
  Tate-cohomology over Gorenstein rings, Unpublished manuscript
  (1987), 155 pp.
%
\bibitem{Hartshorne} R. Hartshorne, Algebraic geometry, Graduate 
Texts in Mathematics, No. 52. Springer-Verlag
%
\bibitem{Kra} H. Krause, The stable derived category of a Noetherian
  scheme, Compos. Math. {\bf 141} (2005), no.~5, 1128--1162.
%
\bibitem{Nee} A. Neeman, The derived category of an exact category,
  J. Algebra {\bf 135} (1990), no.~2, 388--394.
%
\bibitem{Nee1992} A. Neeman, The connection between the $K$-theory
  localization theorem of Thomason, Trobaugh and Yao and the smashing
  subcategories of Bousfield and Ravenel, Ann. Sci. \'Ecole
  Norm. Sup. (4) {\bf 25} (1992), no.~5, 547--566.
%
\bibitem{NeeGD} A. Neeman, The Grothendieck duality theorem via Bousfield's techniques and Brown
   representability, J. Amer. Math. Soc. {\bf 9} (1996), no.~1 205--236
%
\bibitem{OpSt} S. Oppermann and  J. Stovicek, Generating the bounded
  derived category and perfect ghosts, to appear in Bull. Lond. Math. Soc.

\bibitem{Orl} D. O. Orlov, Triangulated categories of singularities
  and D-branes in Landau-Ginzburg models, Tr. Mat. Inst. Steklova {\bf
    246} (2004), Algebr. Geom. Metody, Svyazi i Prilozh., 240--262;
  translation in Proc. Steklov Inst. Math. {\bf 2004}, no.~3 (246),
  227--248.
%
\bibitem{Qui} D. Quillen, Higher algebraic $K$-theory. I, in {\it
    Algebraic $K$-theory, I: Higher $K$-theories (Proc. Conf.,
    Battelle Memorial Inst., Seattle, Wash., 1972)}, 85--147. Lecture
  Notes in Math., 341, Springer, Berlin, 1973.
%
\bibitem{Sch} H. Schoutens, Projective dimension and the singular
  locus, Comm. Algebra {\bf 31} (2003), no.~1, 217--239.
%
\bibitem{Ste} G. Stevenson, Subcategories of singularity categories via tensor actions, 	arXiv:1105.4698.
%
%
\bibitem{Tak} R. Takahashi, Classifying thick subcategories of the stable category of Cohen-Macaulay modules, Adv. Math. {\bf 225} (2010), no.~4, 2076--2116.
%
\bibitem{Tak:2010} R. Takahashi, Thick subcategories over Gorenstein local rings that are locally hypersurfaces on the punctured spectra, J. Math. Soc. Japan; to appear, arXiv:1109.3120
%
\bibitem{Ver} J.-L. Verdier, Des cat\'egories d\'eriv\'ees des cat\'egories ab\'eliennes, Ast\'erisque No. 239 (1996), {\rm xii}+253 pp. (1997).

\end{thebibliography}
\end{document}